\newtheorem{thm}{Theorem}[section]
\newtheorem{cor}[thm]{Corollary}
\newtheorem{lem}[thm]{Lemma}
\newtheorem{prop}[thm]{Proposition}
\theoremstyle{definition}
\newtheorem{defn}[thm]{Definition}
\newtheorem*{question}{Question}
\theoremstyle{remark}
\newtheorem{rem}[thm]{Remark}
\numberwithin{equation}{section}
\newcommand{\Z}{\mathbb Z}
\newcommand{\Q}{\mathbb Q}
\newcommand{\fix}{\mathrm{Fix}}
\newcommand{\ind}{\mathrm{ind}}
\newcommand{\fpc}{\mathrm{Fpc}}
\newcommand{\F}{\mathbf{F}}      
\newcommand{\trace}{\mathrm{Trace}}
\newcommand{\B}{\mathcal{B}}
\newcommand{\im}{\mathrm{Image}}
\def\a{\alpha}
\def\b{\beta}
\begin{document}

\title{Bounds for fixed points on products of hyperbolic surfaces}
\author{Qiang Zhang, Xuezhi Zhao}
\address{School of Mathematics and Statistics, Xi'an Jiaotong University,
Xi'an 710049, China}
\email{zhangq.math@mail.xjtu.edu.cn}
\address{Department of Mathematics, Capital Normal University, Beijing 100048, China}
\email{zhaoxve@mail.cnu.edu.cn}

\thanks{The first author is partially supported by NSFC (No. 11771345), and the second author is partially supported by NSFC (No. 11431009 and No. 11661131004).}

\subjclass[2010]{55M20, 55N10, 32Q45}

\keywords{Nielsen numbers, Lefschetz numbers, indices, bounds, hyperbolic surfaces, products}

\dedicatory{Dedicated to Professor Boju Jiang on his 80th birthday}
\begin{abstract}
For the product $S_1\times S_2$ of any two connected compact hyperbolic surfaces $S_1$ and $S_2$, we give a finite bound $\mathcal{B}$ such that for any self-homeomorphism $f$ of $S_1\times S_2$ and any fixed point class $\F$ of $f$, the index $|\ind(f, \F)|\leq \mathcal{B}$, which is an affirmative answer for a special case of a question asked by Boju Jiang. Moreover, we also give bounds for the Lefschetz number $L(f)$ and the Nielsen number $N(f)$ of the homeomorphism $f$.
\end{abstract}
\maketitle


\section{Introduction}

Fixed point theory studies fixed points of a selfmap $f$ of a space $X$. Nielsen fixed point theory, in particular, is concerned with
the properties of the fixed point set
$$\fix f:=\{x\in X|f(x)=x\}$$
that are invariant under homotopy of the map $f$ (see \cite{fp1} for an introduction).

The fixed point set $\fix f$ splits into a disjoint union of \emph{fixed point classes}: two fixed
points $a$ and $a'$ are in the same class if and only if there is a lifting $\tilde f: \widetilde X\to \widetilde X$ of $f$ such that $a, a'\in p(\fix \tilde f)$, where $p:\widetilde X\to X$ is the universal cover. Let $\fpc(f)$ denote the set of all the fixed point classes of $f$. For each fixed point class $\F\in \fpc(f)$, a homotopy invariant \emph{index} $\ind(f,\F)\in \Z$ is defined. A fixed point class is \emph{essential} if its index is non-zero. The number of essential fixed point classes of $f$ is called the $Nielsen$ $number$ of $f$, denoted by $N(f)$. The famous Lefschetz fixed point theorem says that the sum of the indices of the fixed points of $f$ is equal to the $Lefschetz$ $number$ $L(f)$, which is defined as
$$L(f):=\sum_q(-1)^q\trace (f_*: H_q(X;\Q)\to H_q(X;\Q)).$$

A compact polyhedron $X$ is said to have the \emph{Bounded Index Property (BIP)} if there is an integer $\B>0$ such that for any map $f: X\rightarrow X$ and any fixed point class $\F$ of $f$, the index $|\ind(f,\F)|\leq \B$. $X$ has the \emph{Bounded Index Property for Homeomorphisms (BIPH)} if there is such a bound for all homeomorphisms $f:X\rightarrow X$.

In \cite{fp2}, B. Jiang showed that graphs and surfaces with negative Euler characteristics have BIP (see \cite{JWZ} for an enhanced version):

\begin{thm}[Jiang, \cite{fp2}]\label{fpc for surf.}
Suppose $X$ is a connected compact surface with Euler characteristic $\chi(X) < 0$, and suppose $f : X \to X$ is a selfmap.
Then the indices of the Nielsen fixed point classes of $f$ are bounded:

$\mathrm{(A)}$ $\ind(\F)\leq 1$ for every fixed point class
$\F$ of $f$;

$\mathrm{(B)}$ almost every fixed point class $\F$ of $f$ has index $\geq -1$, in the sense
that
$$\sum_{\ind(\F)<-1}\{\ind(\F)+1\}\geq 2\chi(X),$$
where the sum is taken over all fixed point classes $\F$ with
$\ind(\F)<-1$;

$\mathrm{(C)}$ $|L(f)-\chi(X)|\leq N(f)-\chi(X),$ where $L(f)$ and $N(f)$ are the Lefschetz number
and the Nielsen number of $f$ respectively.

\end{thm}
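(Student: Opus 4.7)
My plan is to exploit the hyperbolic structure on $X$. Since $\chi(X)<0$, the surface carries a complete hyperbolic metric, and the universal cover $p:\widetilde{X}\to X$ may be identified with (a convex subset of) the hyperbolic plane $\mathbb{H}^2$. My first step is to replace $f$ by a homotopic representative in Jiang--Guo \emph{standard form}: each connected component of $\fix f$ is either a single transverse fixed point of local index $\pm 1$ or a compact subpolyhedron whose lift to $\mathbb{H}^2$ sits inside a convex geodesic polygon, and distinct components lie in distinct fixed-point classes. Since $\ind(f,\cdot)$, $L(f)$, and $N(f)$ are all homotopy invariants, this reduction is free of cost.

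With the reduction in place, I would attack (A) and (B) together via Gauss--Bonnet. Fix a class $\F$ and a lift $\tilde{f}:\widetilde{X}\to\widetilde{X}$ whose fixed set projects to $\F$. The index $\ind(f,\F)$ equals the Poincar\'e--Hopf index of the ``displacement field'' associated to $\tilde{f}$ on the geodesic polygon $P_\F$ produced by the standard form; this expresses $\ind(f,\F)$ as $\chi(P_\F)$ minus a nonnegative boundary contribution coming from how $\tilde{f}$ moves $\partial P_\F$. Hence $\ind(f,\F)\leq\chi(P_\F)\leq 1$, giving (A). For (B), observe that the projections $p(P_\F)\subset X$ are pairwise disjoint (distinct classes correspond to distinct $\pi_1$-conjugacy classes of lifts), so their hyperbolic areas sum to at most $\mathrm{Area}(X)=-2\pi\chi(X)$. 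Converting area back into Euler characteristic for each $P_\F$ via Gauss--Bonnet on that polygon and summing over those $\F$ with $\ind(\F)<-1$ then produces the inequality $\sum_{\ind(\F)<-1}(\ind(\F)+1)\geq 2\chi(X)$.

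Part (C) is purely formal given (A) and (B). Writing $L(f)=\sum_\F\ind(f,\F)$ and separating classes according to whether $\ind(\F)$ equals $1$, $0$, $-1$, or is $<-1$, one reads off from (A) that $L(f)\leq N(f)$, and from (B) combined with the trivial count of essential classes of negative index that $L(f)\geq 2\chi(X)-N(f)$; together these two bounds are equivalent to $|L(f)-\chi(X)|\leq N(f)-\chi(X)$. The principal obstacle in the whole argument will be establishing the standard-form reduction---in particular, producing canonical convex geodesic polygons enclosing each nontrivial fixed-point class in a compatible way on the universal cover---which is precisely where the hypothesis $\chi(X)<0$ is used in a substantive manner. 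Downstream of that reduction, the argument becomes essentially Gauss--Bonnet accounting together with elementary arithmetic.
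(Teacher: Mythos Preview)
This theorem is not proved in the present paper at all: it is quoted verbatim from Jiang's 1998 paper \cite{fp2} and used as a black box throughout (in Propositions~\ref{fiber case} and~\ref{alter case}). There is therefore no ``paper's own proof'' to compare your proposal against.

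That said, a few remarks on your sketch may be useful. Your derivation of (C) from (A) and (B) is correct and is exactly how (C) is obtained in \cite{fp2}. For (A) and (B), however, your outline diverges substantially from Jiang's actual argument and, as written, has real gaps. Jiang does not work with convex geodesic polygons in $\mathbb{H}^2$ or a Gauss--Bonnet computation; instead he deformation-retracts the surface onto a spine, replaces $f$ by a homotopic map in \emph{mutant standard form} on a fat graph, and analyzes the fixed-point classes combinatorially via the graph structure. Your claimed identity $\ind(f,\F)=\chi(P_\F)-(\text{nonnegative boundary term})$ is not established by anything you wrote: relating the local index of a surface self-map to the Euler characteristic of an enclosing polygon requires a careful vector-field or Lefschetz-type argument that you have only gestured at. Likewise, the assertion that the projections $p(P_\F)$ for distinct classes are pairwise disjoint in $X$ is not automatic---distinct fixed-point classes correspond to distinct conjugacy classes of lifts, but nothing prevents their associated polygons from overlapping downstairs unless the standard form is constructed with this disjointness built in, which is precisely the hard part you have deferred. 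In short, your plan identifies the correct shape of the argument (reduce to a controlled representative, then do Euler-characteristic bookkeeping), but the specific mechanism you propose is not the one Jiang uses, and the steps you label as routine are where the genuine work lies.
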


Moreover, he asked the following question (see \cite[Qusetion 3]{fp2}): Does every compact aspherical polyhedron $X$ (i.e. $\pi_i(X)=0$ for all $i>1$) have BIP or BIPH?

In \cite{Mc}, C. McCord showed that infrasolvmanifolds (manifolds which admit a finite cover by a compact solvmanifold) have BIP. In \cite{JW}, B. Jiang and S. Wang showed that geometric 3-manifolds have BIPH for orientation-preserving self-homeomorphisms: the index of each essential fixed point class is $\pm 1$. In \cite{Z}, the first author showed that orientable compact Seifert 3-manifolds with hyperbolic orbifolds have BIPH. Later in \cite{Z2, Z3}, the first author showed that compact hyperbolic $n$-manifolds (not necessarily orientable) also have BIPH.

In this note, we consider the product of two connected compact surfaces with negative Euler characteristics, and show it has BIPH. Such a surface is also said to be a hyperbolic one, because it always admits a Riemannian metric of constant curvature $-1$.

The main result of this note is the following

\begin{thm}\label{main thm}
Suppose $f: S_1\times S_2\to S_1\times S_2$ is a homeomorphism, where $S_1$ and $S_2$ are two connected compact surfaces with Euler characteristics $\chi_1:=\chi(S_1)\leq \chi_2:=\chi(S_2)<0.$ Then the indices of the Nielsen fixed point classes of $f$ are bounded:

$\mathrm{(A)}$ For every fixed point class $\F$ of $f$, we have
$$2\chi_1-1\leq \ind(f,\F)\leq (2\chi_1-1)(2\chi_2-1);$$

$\mathrm{(B)}$ Let $L(f)$ and $N(f)$ be the Lefschetz number
and the Nielsen number of $f$ respectively. Then
$$|L(f)-2\chi_1\chi_2|\leq (1-2\chi_1)N(f)+2(\chi_1\chi_2-\chi_1).$$
\end{thm}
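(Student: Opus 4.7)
The plan is to first reduce $f$ to a simple homotopy representative. Since $S_1 \times S_2$ is aspherical, the homotopy class of $f$ is determined by the induced automorphism $f_*$ of $\pi_1(S_1) \times \pi_1(S_2)$ up to inner automorphism. Each factor $\pi_1(S_i)$ is centerless and directly indecomposable (either a closed hyperbolic surface group or a free group of rank $\geq 2$), so a Krull--Remak--Schmidt / centralizer argument will force $f_*$ to either preserve both factors or (only when $S_1 \cong S_2$) swap them. Combining this with Dehn--Nielsen--Baer applied to each factor, I may assume up to homotopy that $f$ is either a \emph{product} $f_1 \times f_2$ with $f_i \in \mathrm{Homeo}(S_i)$, or a \emph{twist} $(x,y) \mapsto (g(y), h(x))$ with $g, h \in \mathrm{Homeo}(S)$ for the common surface $S = S_1 = S_2$. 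All of $\ind(f, \F)$, $L(f)$, $N(f)$ are homotopy invariants, so this replacement costs nothing.

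In the product case I would invoke the classical product formula for fixed-point indices, which gives a bijection between Nielsen classes $\F$ of $f$ and pairs $(\F_1, \F_2)$, with
\[
\ind(f, \F) = \ind(f_1, \F_1) \cdot \ind(f_2, \F_2),
\]
and hence $N(f) = N(f_1) N(f_2)$ and (by K\"unneth) $L(f) = L(f_1) L(f_2)$. In the twist case, a fixed point $(x_0, y_0)$ of $f$ satisfies $y_0 \in \fix(h \circ g)$ and $x_0 = g(y_0)$, so fixed points and Nielsen classes of $f$ biject with those of $h \circ g$ with indices preserved; a short homology calculation (using that the swap acts as $-1$ on $H_1 \otimes H_1$) gives $L(f) = L(h \circ g)$ as well. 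In either case the data reduces to self-maps of a single hyperbolic surface, to which Theorem \ref{fpc for surf.} applies.

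For Part (A), Theorem \ref{fpc for surf.}(A),(B) gives $\ind(f_i, \F_i) \in [2\chi_i - 1, 1]$, and multiplying these ranges (using $\chi_1 \leq \chi_2 < 0$) yields exactly $[2\chi_1 - 1,\; (2\chi_1 - 1)(2\chi_2 - 1)]$; in the twist case $\ind(h \circ g, \cdot) \in [2\chi_1 - 1, 1]$ already lies in this interval. For Part (B), the twist case will follow from Theorem \ref{fpc for surf.}(C) applied to $h \circ g$ together with the triangle inequality (using $\chi_1 = \chi_2$ in this case). The heart of Part (B) is the product case: Theorem \ref{fpc for surf.}(C) gives $L(f_i) \in [2\chi_i - N(f_i), N(f_i)]$, and I would show that $|L(f_1) L(f_2) - 2\chi_1 \chi_2|$ is maximized over this rectangle at the corner $L(f_i) = 2\chi_i - N(f_i)$; the resulting inequality reduces algebraically to
\[
|\chi_1|\bigl(N(f_2)(N(f_1) - 1) + 1\bigr) \geq |\chi_2| N(f_1),
\]
which holds because $|\chi_1| \geq |\chi_2|$ (the degenerate cases $N(f_i) = 0$ force $L(f) = 0$ and are handled directly).

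The step I expect to be the main obstacle is the rigidity reduction itself: proving that every self-homeomorphism of $S_1 \times S_2$ is homotopic to a product or swap map is where the hyperbolicity of \emph{both} factors is used in an essential way, and it is also the step whose failure (e.g.\ with a torus factor) would destroy the whole statement. Once this rigidity is in hand, Parts (A) and (B) amount to Theorem \ref{fpc for surf.} combined with a fairly direct corner-of-the-rectangle optimization.
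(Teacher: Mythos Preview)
Your proposal is correct and follows essentially the same approach as the paper: reduce via asphericity, the rigidity of automorphisms of $\pi_1(S_1)\times\pi_1(S_2)$, and Dehn--Nielsen--Baer to the product and swap cases (the paper's Propositions~\ref{standard form}, \ref{fiber case}, \ref{alter case}), then apply Theorem~\ref{fpc for surf.} factorwise with the same product formulas and inequality manipulations. The only cosmetic differences are that the paper cites \cite{ZVW} for the group-theoretic rigidity rather than invoking Krull--Remak--Schmidt, and deduces $L(f)=L(f_2\circ f_1)$ in the alternating case from the index-preserving bijection rather than a separate homology computation.
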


We also consider a special case of selfmaps of $S_1\times S_2$, and give some bounds (see Proposition \ref{fiber case}) parallel to Theorem \ref{main thm} on the index, the Lefschetz number
and the Nielsen number in Section 2. In Section 3, we give some bounds for the fixed points on the alternating homeomorphisms, see Proposition \ref{alter case}. Finally, in Section 4, we complete the proof of Theorem \ref{main thm}, and give an open question.

\section{Fiber-preserving maps}

In this section, let $S_1, S_2$ be two connected compact surfaces, and $\chi_i$ the Euler characteristic of $S_i$, $\chi_1\leq \chi_2<0$.

\begin{defn} A selfmap $f: S_1\times S_2\to S_1\times S_2$ is called a \emph{fiber-preserving map}, if
$$f=f_1\times f_2: S_1\times S_2\to S_1\times S_2,\quad (a,b)\mapsto (f_1(a),f_2(b)),$$
where $f_i$ is a selfmap of $S_i(1=1,2)$.
\end{defn}

\begin{lem}\label{product}
If $f: S_1\times S_2\to S_1\times S_2$ is a fiber-preserving map, then $\fix f=\fix f_1\times \fix f_2,$ and each fixed point class $\F\in \fpc(f)$ splits into a product of some fixed point classes of $f_i$, i.e.,
$$\F=\F_1\times \F_2, \quad\quad \ind(f,\F)=\ind(f_1,\F_1)\cdot \ind(f_2, \F_2),$$
where $\F_i\in \fpc (f_i)$ is a fixed point class of $f_i$ for $i=1,2$. Moreover,
$$L(f)=L(f_1)\cdot L(f_2),\quad N(f)=N(f_1)\cdot N(f_2).$$
\end{lem}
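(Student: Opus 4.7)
The plan is to verify the four assertions in order, with the main work going into the fixed-point-class splitting and the multiplicativity of the index.

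First I would prove the set-theoretic identity $\fix f=\fix f_1\times \fix f_2$, which is immediate from the definition of $f$: a point $(a,b)$ satisfies $f(a,b)=(a,b)$ if and only if $f_1(a)=a$ and $f_2(b)=b$. Next, to split the fixed point classes, I would work on the universal cover. The universal cover of $S_1\times S_2$ is $\widetilde S_1\times \widetilde S_2$ with covering map $p=p_1\times p_2$, and every lift of $f$ has the form $\tilde f=\tilde f_1\times \tilde f_2$ where $\tilde f_i$ is a lift of $f_i$. Hence $\fix\tilde f=\fix\tilde f_1\times \fix\tilde f_2$ and therefore $p(\fix\tilde f)=p_1(\fix\tilde f_1)\times p_2(\fix\tilde f_2)$. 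Two fixed points of $f$ lie in the same class precisely when they are covered by the same lift, so the Nielsen classes of $f$ are exactly products $\F_1\times \F_2$ with $\F_i\in\fpc(f_i)$.

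For the index formula $\ind(f,\F_1\times \F_2)=\ind(f_1,\F_1)\cdot\ind(f_2,\F_2)$, I would invoke the product formula for the fixed point index on a product of ANRs: if $U_i\subset S_i$ is an open set with $\fix(f_i|_{U_i})=\F_i$ compact, then $U_1\times U_2$ isolates $\F_1\times\F_2$ in $\fix f$, and the classical multiplicativity property of the fixed point index (see e.g.\ Dold's treatment) gives the product. The Lefschetz number identity $L(f)=L(f_1)\cdot L(f_2)$ then follows either from summing indices over all classes, or directly from the K\"unneth formula: $H_*(S_1\times S_2;\Q)\cong H_*(S_1;\Q)\otimes H_*(S_2;\Q)$ and $f_*=(f_1)_*\otimes (f_2)_*$, so taking the graded trace yields $\trace(f_*)=\trace((f_1)_*)\cdot \trace((f_2)_*)$.

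Finally, for the Nielsen number, the index product formula shows that $\F=\F_1\times\F_2$ is essential if and only if both $\F_1$ and $\F_2$ are essential; counting such pairs gives $N(f)=N(f_1)\cdot N(f_2)$. The only place where I expect any delicacy is the multiplicativity of the fixed point index in step three, which rests on general nonsense about the index on ANRs rather than anything specific to surfaces, so I would simply cite it. Everything else is formal manipulation from definitions together with K\"unneth.
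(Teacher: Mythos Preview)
Your proposal is correct and follows essentially the same route as the paper: the same direct computation of $\fix f$, the same universal-cover argument for splitting classes (the paper phrases it pointwise, you phrase it via $\fix\tilde f=\fix\tilde f_1\times\fix\tilde f_2$, but the content is identical), and the same appeal to multiplicativity of the fixed point index. The only cosmetic difference is that the paper cites Jiang's book for the product formulas $L(f)=L(f_1)L(f_2)$ and $N(f)=N(f_1)N(f_2)$, whereas you spell them out via K\"unneth and via counting essential product classes; these are exactly the arguments behind the cited theorems, so nothing is gained or lost either way.
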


\begin{proof}
Note that
$$\fix f=\{(a,b)|f_1(a)=a, f_2(b)=b\}=\fix f_1\times \fix f_2.$$
Suppose $(a,b)$ and $(a',b')$ are two fixed points in the same fixed point class $\F\in \fpc(f)$, then there is a lifting $\tilde f_1$ of $f_1$ and a lifting $\tilde f_2$ of $f_2$ such that $$(a,b),(a',b')\in (p_1\times p_2)(\fix(\tilde f_1\times \tilde f_2))$$
where $p_i: \widetilde S_i\to S_i$ is the universal cover for $i=1,2$. Hence $a, a'\in p_1(\fix \tilde f_1)$ and $b, b'\in p_2(\fix \tilde f_2)$, namely, $a, a'$ are in the same fixed point class $\F_1$ of $f_1$ and $b,b'$ in the same fixed point class $\F_2$ of $f_2$. Conversely, if $a, a'$ are in the same fixed point class $\F_1$ of $f_1$ and $b,b'$ in the same fixed point class $\F_2$ of $f_2$, then there is a lifting $\tilde f_1$ of $f_1$ and a lifting $\tilde f_2$ of $f_2$ such that $a, a'\in p_1(\fix \tilde f_1)$ and  $b, b'\in p_2(\fix \tilde f_2)$. It implies that $(a,b),(a',b')\in (p_1\times p_2)(\fix(\tilde f_1\times \tilde f_2))$. Since $\tilde f_1\times \tilde f_2$ is a lifting of $f=f_1\times f_2$, we obtain that $(a,b)$ and $(a',b')$ are in the same fixed point class $\F$ of $f$. Therefore, $\F=\F_1\times \F_2$, and hence $\ind(f,\F)=\ind(f_1,\F_1)\cdot \ind(f_2, \F_2)$ by the multiplicativity of the index.

Since $f=f_1\times f_2$ is a fiber-preserving map, by ``Product formula for the Lefschetz number" \cite[pp.85, Theorem 3.2]{fp1} and ``Product formula for the Nielsen number \cite[pp.88, Theorem 4.1]{fp1}, we have $L(f)=L(f_1)\cdot L(f_2)$ and $N(f)=N(f_1)\cdot N(f_2).$
\end{proof}

Now we give the bounds for indices, Lefschetz numbers and Nielsen numbers of fiber-preserving maps.

\begin{prop}\label{fiber case} If $f: S_1\times S_2\to S_1\times S_2$ is a fiber-preserving map, then

$\mathrm{(A)}$ For every fixed point class $\F$ of $f$, we have
$$2\chi_1-1\leq \ind(f,\F)\leq (2\chi_1-1)(2\chi_2-1);$$

$\mathrm{(B)}$ Let $L(f)$ and $N(f)$ be the Lefschetz number
and the Nielsen number of $f$ respectively. Then
$$|L(f)-2\chi_1\chi_2|\leq (1-2\chi_1)N(f)+2(\chi_1\chi_2-\chi_1).$$
\end{prop}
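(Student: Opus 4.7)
The plan is to deduce both parts from Lemma~\ref{product} together with Theorem~\ref{fpc for surf.} applied to $f_1$ and $f_2$ separately.

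For part~(A), Lemma~\ref{product} gives $\F=\F_1\times \F_2$ with $\ind(f,\F) = \ind(f_1,\F_1)\cdot \ind(f_2,\F_2)$. I would first extract from Theorem~\ref{fpc for surf.} the single-class estimate
$$2\chi_i - 1 \;\leq\; \ind(f_i,\F_i) \;\leq\; 1 \qquad (i=1,2),$$
the upper half being part~(A) of that theorem, and the lower half coming from part~(B): any class with $\ind(f_i,\F_i) < -1$ contributes a single (negative) summand to $\sum_{\ind<-1}(\ind+1)\geq 2\chi_i$ and therefore satisfies $\ind(f_i,\F_i)\geq 2\chi_i - 1$, while the remaining classes satisfy it trivially since $\chi_i\leq 0$. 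The product $\ind(f_1,\F_1)\cdot \ind(f_2,\F_2)$ is then a bilinear integer-valued function on the rectangle $[2\chi_1-1,1]\times[2\chi_2-1,1]$; using $\chi_1\leq \chi_2<0$ (so both $2\chi_i-1\leq -1$), a quick check of the four corners shows the maximum is $(2\chi_1-1)(2\chi_2-1)$ (attained at $(2\chi_1-1,2\chi_2-1)$) and the minimum is $2\chi_1-1$ (attained at $(2\chi_1-1,1)$).

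For part~(B), Lemma~\ref{product} gives $L(f)=L_1L_2$ and $N(f)=N_1N_2$, where I write $L_i:=L(f_i)$ and $N_i:=N(f_i)$. Theorem~\ref{fpc for surf.}(C) yields $L_i\in [2\chi_i - N_i,\,N_i]$. If $N_1N_2=0$, say $N_i=0$, then every index of $f_i$ is zero, so $L_i=0$ and hence $L(f)=0$; the claim then reduces to $2\chi_1\chi_2 \leq 2\chi_1\chi_2 - 2\chi_1$, i.e.\ $0\leq -2\chi_1$, which holds. Otherwise $N_1,N_2\geq 1$, and I would extremize the bilinear map $(L_1,L_2)\mapsto L_1L_2$ over the rectangle $[2\chi_1-N_1,N_1]\times [2\chi_2-N_2,N_2]$ by examining its four corners: the maximum is $(N_1-2\chi_1)(N_2-2\chi_2)$, attained at $(2\chi_1-N_1,2\chi_2-N_2)$, and the minimum is whichever of $(2\chi_1-N_1)N_2$ or $N_1(2\chi_2-N_2)$ is the more negative.

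The main obstacle is the algebraic verification that these two extreme values fit inside the claimed window centered at $2\chi_1\chi_2$ of half-width $(1-2\chi_1)N_1N_2 + 2(\chi_1\chi_2-\chi_1)$. After expansion, the upper bound reduces to
$$-2\chi_1\bigl(N_2(N_1-1)+1\bigr) + 2\chi_2 N_1 \;\geq\; 0,$$
which I would establish by combining $-\chi_1\geq -\chi_2 > 0$ with the elementary estimate $N_2(N_1-1)+1\geq N_1$, valid because $(N_1-1)(N_2-1)\geq 0$. The lower bound splits into two sub-cases depending on which corner realizes the minimum, but each reduces to an analogous chain of the form $(-\chi_1)(N_1N_2+1) \geq (-\chi_2) N_1$, which again follows from $-\chi_1\geq -\chi_2$ together with $N_1N_2+1\geq N_1$ (true when $N_2\geq 1$). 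Once these two monotonicity inequalities are in hand, the bound on $|L_1L_2 - 2\chi_1\chi_2|$ follows mechanically.
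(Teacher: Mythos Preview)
Your proposal is correct and follows essentially the same route as the paper: both parts come from Lemma~\ref{product} plus Theorem~\ref{fpc for surf.} applied factorwise, the degenerate case $N_1N_2=0$ is handled separately, and the remaining algebra rests on $(N_1-1)(N_2-1)\geq 0$ together with $-\chi_1\geq -\chi_2>0$. The only cosmetic difference is that you phrase the estimate as extremizing a bilinear function over a rectangle and checking corners, whereas the paper writes the same computation as a chain of inequalities; the underlying steps are identical.
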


\begin{proof}
(A) Bounds for indices

According to Theorem \ref{fpc for surf.}, for every fixed point class $\F_i$ of $f_i$, we have
$$1\geq \ind(f_i, \F_i)\geq 2\chi_i-1\geq 2\chi_1-1.$$
So by Lemma \ref{product},
$$\ind(f,\F)=\ind(f_1,\F_1)\cdot \ind(f_2, \F_2)\geq 2\chi_1-1,$$
$$\ind(f,\F)=\ind(f_1,\F_1)\cdot \ind(f_2, \F_2)\leq (2\chi_1-1)(2\chi_2-1),$$
i.e., for every fixed point class $\F$ of $f$, conclusion (A) holds.

(B) Bounds for Lefschetz \& Nielsen numbers

If either $N(f_1)$ or $N(f_2)$ is zero, then $N(f)=0$, and $L(f)=0$ according to the Lefschetz fixed point theorem. Since $\chi_1\leq \chi_2<0$, conclusion (B) clearly holds in this case.

Now we suppose $N(f_1)\geq 1$ and $N(f_2)\geq 1$.

According to Theorem \ref{fpc for surf.}, we have
$$N(f_i)\geq L(f_i)\geq -N(f_i)+2\chi_i,~~ i=1,2.$$
Then
\begin{eqnarray}
L(f)&=&L(f_1)\cdot L(f_2) \nonumber\\
&\leq& (N(f_1)-2\chi_1)(N(f_2)-2\chi_2)\nonumber\\
&=& N(f_1)N(f_2)+4\chi_1\chi_2-2(\chi_1N(f_2)+\chi_2N(f_1)) \nonumber\\
&\leq& N(f)+4\chi_1\chi_2 -2\chi_1(N(f_1)+N(f_2)) \nonumber\\
&\leq& N(f)+4\chi_1\chi_2 -2\chi_1(N(f_1)N(f_2)+1) \nonumber\\
&=& (1-2\chi_1)N(f)+4\chi_1\chi_2-2\chi_1\nonumber
\end{eqnarray}
and
\begin{eqnarray}
L(f)&\geq& min\{N(f_1)(-N(f_2)+2\chi_2),~(-N(f_1)+2\chi_1)N(f_2)\}\nonumber\\
&\geq& -N(f) +2\chi_1(N(f_1)+N(f_2)) \nonumber\\
&\geq& -N(f) +2\chi_1(N(f_1)N(f_2)+1) \nonumber\\
&=& -(1-2\chi_1)N(f) +2\chi_1\nonumber
\end{eqnarray}
Hence
$$|L(f)-2\chi_1\chi_2|\leq (1-2\chi_1)N(f)+2(\chi_1\chi_2-\chi_1),$$
i.e., conclusion (B) holds.
\end{proof}

\section{Alternating homeomorphisms}

In this section, let $S_1= S_2$ be two copies of a connected compact hyperbolic surface $S$, and hence, their Euler characteristics $\chi_1=\chi_2=\chi(S)<0$.

\begin{defn} A self-homeomorphism $f: S_1\times S_2\to S_1\times S_2$ is called an \emph{alternating homeomorphism}, if
$$f=\tau\comp (f_1\times f_2): S_1\times S_2\to S_1\times S_2, \quad (a,b)\mapsto (f_2(b),f_1(a)),$$
where $f_1$, $f_2$ are two self-homeomorphisms of $S$, and $\tau$ is a transposition.
\end{defn}

\begin{lem}\label{transversal}
Two self-homeomorphisms $f_1$ and $f_2$ are isotopic to $g_1$ and $g_2$ such that the graph of corresponding alternating
homeomorphism $g=\tau\comp (g_1\times g_2): S_1\times S_2\to S_1\times S_2$ is transversal with the diagonal in $S_1\times S_2$. Moreover, the distance $d(f_1, g_1)$ and $d(f_2, g_2)$ can be chosen arbitrary small.
\end{lem}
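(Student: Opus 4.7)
The plan is to reduce transversality of the graph of $g$ with the diagonal in $(S_1\times S_2)\times(S_1\times S_2)$ to non-degeneracy of the fixed points of the composition $h:=g_2\circ g_1$ on $S$, and then invoke standard differential-topological transversality. A preliminary smoothing step: since a compact surface carries a unique smooth structure, every self-homeomorphism of $S$ is $C^0$-isotopic to a diffeomorphism by an arbitrarily small isotopy, so without loss of generality I assume $f_1$ and $f_2$ are $C^\infty$-diffeomorphisms.

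Next, I unpack the fixed-point condition. A point $(a,b)\in S_1\times S_2$ is fixed by $g=\tau\circ(g_1\times g_2)$ iff $g_2(b)=a$ and $g_1(a)=b$; equivalently $a\in\fix(h)$ and $b=g_1(a)$. With this parametrization, the derivative $dg_{(a,b)}$ on $T_aS\oplus T_bS$ is block-antidiagonal with blocks $(dg_2)_b$ and $(dg_1)_a$, and a direct block-matrix computation gives
$$\det(I-dg_{(a,b)})=\det(I-(dg_2)_b\circ(dg_1)_a)=\det(I-dh_a).$$
Thus transversality of the graph of $g$ with the diagonal at $(a,b)$ is equivalent to $1$ not being an eigenvalue of $dh_a$, that is, to $a$ being a transverse (non-degenerate) fixed point of $h$.

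It therefore suffices to isotope $f_1,f_2$ by arbitrarily small amounts to diffeomorphisms $g_1,g_2$ such that every fixed point of $h=g_2\circ g_1$ is non-degenerate. This is a classical application of Thom's transversality theorem to the evaluation map $\mathrm{Diff}(S)\to C^\infty(S,S\times S)$ sending $g_1$ to $a\mapsto(a,f_2(g_1(a)))$: a generic $C^\infty$-small perturbation of $f_1$ makes this map transverse to the diagonal of $S\times S$. Concretely, a preliminary small perturbation makes the fixed points of $h$ isolated, hence finite by compactness; at each fixed point $a$ where $dh_a$ has eigenvalue $1$, I modify $g_1$ inside a small ball around $a$ by composing with a bump-supported diffeomorphism whose linearization at $a$ is an arbitrary small linear perturbation. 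This shifts $dh_a$ by a small linear map, and a generic shift removes $1$ from the spectrum. Only finitely many such local corrections are needed; each extends to a small isotopy of $g_1$, and the distances $d(f_1,g_1)$ and $d(f_2,g_2)$ can be made arbitrarily small by choosing the supports and amplitudes of the bumps small.

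The main obstacle is this last step --- realizing the abstract genericity by an honest isotopy of one of the factor diffeomorphisms, rather than by an unstructured perturbation of the graph map $a\mapsto(a,h(a))$ that need not factor through a perturbation of $g_1$ alone. The explicit local bump construction above resolves it, and the remainder of the argument is routine bookkeeping.
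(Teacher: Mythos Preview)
Your argument is correct and shares the paper's central computation: both reduce transversality at a fixed point $(a,b)$ to the condition $\det(I-(dg_2)_b(dg_1)_a)\neq 0$ via the same block-determinant identity. The difference lies in how the perturbation is carried out. The paper works directly in local product charts on $S_1\times S_2$, writes out the $8\times 8$ Jacobian explicitly, applies Sard's theorem to the coordinate map $(u,v)\mapsto(u-f_2(v),\,v-f_1(u))$ to locate a nearby regular value, shifts both $f_1$ and $f_2$ by constants times bump functions, and then proceeds chart-by-chart over a finite cover of $\fix f$, invoking stability of transversality to protect earlier steps. You instead collapse the four-dimensional problem to the two-dimensional one of making the fixed points of $h=g_2\circ g_1$ on $S$ non-degenerate, and perturb only one factor. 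Your route is shorter and makes the link with the index computation in the following lemma transparent (the same determinant identity reappears there); the paper's Sard-plus-translation mechanism, on the other hand, is more explicit about why no new degenerate fixed points are introduced during the inductive process, a point your local linearization sketch treats somewhat loosely. A clean way to complete your reduction, avoiding the bump construction altogether: take any $C^\infty$-small perturbation $\tilde h$ of $h$ with only non-degenerate fixed points (standard transversality for a single surface self-map) and set $g_1:=f_2^{-1}\circ\tilde h$, $g_2:=f_2$; since $f_2$ is a diffeomorphism this realizes the desired perturbation as a small isotopy of $g_1$ alone.
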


\begin{proof}
Note that for a compact hyperbolic surface, every homeomorphism is isotopic to a diffeomorphism. Since $S_1=S=S_2$ is compact, we may choose an atlas $\{(U_i, \psi_i)\mid i=1,2,\ldots n\}$ with finite elements. For each fixed point $(a,b)$ of $f$, there is an open neighborhood $P$ of $a$ and an open neighborhood $Q$ of $b$ such that $\bar P\cup f_2(\bar Q)\subset U_i$ and $\bar Q\cup f_1(\bar P)\subset U_j$ for some $i$ and $j$. The compactness of the fixed point set of $f$ implies that we can chose finitely many $P_k\times Q_k$, which is a neighborhood of some fixed point of $f$, such that $\fix f\subset \cup_{k=1}^m P_k\times Q_k$.

We shall modify $f_1$ and $f_2$ on $P_k\times Q_k$ inductively so that $f$ has its desired transversality. Begin with $P_1\times Q_1$. We have that  $\bar P_1\cup f_2(\bar Q_1)\subset U_i$ and $\bar Q_1\cup f_1(\bar P_1)\subset U_j$ for some $i$ and $j$. Then $(P_1\times Q_1, \psi_i|_{P_1}\times \psi_j|_{Q_1})$ is a chart of $S_1\times S_2$ at some fixed point $(a,b)$ of $f$, and $(U_i\times U_j\times U_i\times U_j,\psi_i\times \psi_j\times \psi_i\times \psi_j)$ is a chart of $(S^1\times S^2)^2$. Under these charts, the diagonal map and the graph map of $f$ are respectively given by
$$(u_1, u_2, u_3, u_4)\mapsto (u_1, u_2, u_3, u_4, u_1, u_2, u_3, u_4)$$
and
$$(u_1, u_2, u_3, u_4)\mapsto (u_1, u_2, u_3, u_4, f_{21}(u_3, u_4), f_{22}(u_3, u_4), f_{11}(u_1, u_2), f_{12}(u_1, u_2)),$$
where $(f_{11}, f_{12}) = \psi_j\comp f_1\comp \psi_i^{-1}|_{\psi_i(P_1)}$ and $(f_{21}, f_{22}) = \psi_i\comp f_2\comp\psi_j^{-1}|_{\psi_j(Q_1)}$.
Hence, their differentials are respectively
$$
\left(
  \begin{array}{cccc}
    1 & 0 & 0 & 0 \\
    0 & 1 & 0 & 0 \\
    0 & 0 & 1 & 0 \\
    0 & 0 & 0 & 1 \\
    1 & 0 & 0 & 0 \\
    0 & 1 & 0 & 0 \\
    0 & 0 & 1 & 0 \\
    0 & 0 & 0 & 1
\end{array}
\right)
\ \and \
\left(
  \begin{array}{cccc}
    1 & 0 & 0 & 0 \\
    0 & 1 & 0 & 0 \\
    0 & 0 & 1 & 0 \\
    0 & 0 & 0 & 1 \\
    0 & 0 & \frac{\partial f_{21}}{\partial u_3} & \frac{\partial f_{21}}{\partial u_4} \\
    0 & 0 & \frac{\partial f_{22}}{\partial u_3} & \frac{\partial f_{22}}{\partial u_4} \\
    \frac{\partial f_{11}}{\partial u_1} & \frac{\partial f_{11}}{\partial u_2} & 0 & 0 \\
    \frac{\partial f_{12}}{\partial u_1} & \frac{\partial f_{12}}{\partial u_2} & 0 & 0
\end{array}
\right).
$$
Thus, the graph map of $f$ is transversal at the diagonal if and only if
$$
\det\left(
  \begin{array}{cccccccc}
  1 & 0 & 0 & 0 & 1 & 0 & 0 & 0 \\
  0 & 1 & 0 & 0 & 0 & 1 & 0 & 0 \\
  0 & 0 & 1 & 0 & 0 & 0 & 1 & 0 \\
  0 & 0 & 0 & 1 & 0 & 0 & 0 & 1 \\
  1 & 0 & 0 & 0 & 0 & 0 & \frac{\partial f_{21}}{\partial u_3} & \frac{\partial f_{21}}{\partial u_4} \\
  0 & 1 & 0 & 0 & 0 & 0 & \frac{\partial f_{22}}{\partial u_3} & \frac{\partial f_{22}}{\partial u_4} \\
  0 & 0 & 1 & 0 &  \frac{\partial f_{11}}{\partial u_1} & \frac{\partial f_{11}}{\partial u_2} & 0 & 0 \\
  0 & 0 & 0 & 1 & \frac{\partial f_{12}}{\partial u_1} & \frac{\partial f_{12}}{\partial u_2} & 0 & 0
\end{array}
\right) \ne 0,
$$
i.e.,
$$\det\left(
  \begin{array}{cccc}
    1 & 0 & -\frac{\partial f_{21}}{\partial u_3} & -\frac{\partial f_{21}}{\partial u_3} \\
    0 & 1 & -\frac{\partial f_{22}}{\partial u_3} & -\frac{\partial f_{22}}{\partial u_3} \\
    -\frac{\partial f_{11}}{\partial u_1} & -\frac{\partial f_{11}}{\partial u_2} & 1 & 0 \\
    -\frac{\partial f_{12}}{\partial u_1} & -\frac{\partial f_{12}}{\partial u_2} & 0 & 1
\end{array}
\right)
=
\det(\left(
  \begin{array}{cc}
    1 & 0 \\
    0 & 1
 \end{array}
\right)
-
\left(
  \begin{array}{cccc}
    \frac{\partial f_{11}}{\partial u_1} & \frac{\partial f_{11}}{\partial u_2} \\
    \frac{\partial f_{12}}{\partial u_1} & \frac{\partial f_{12}}{\partial u_2}
\end{array}
\right)
\left(
  \begin{array}{cccc}
    \frac{\partial f_{21}}{\partial u_3} & \frac{\partial f_{21}}{\partial u_4} \\
    \frac{\partial f_{22}}{\partial u_3} & \frac{\partial f_{22}}{\partial u_4}
\end{array}
\right))\ne 0.
$$
This is equivalent to say that the map
$$h: \psi_i(P_1)\times \psi_j(Q_1)\to U_i\times U_j,$$
given by
$$(u_1, u_2, u_3, u_4)\mapsto (u_1-f_{21}(u_3, u_4), u_2-f_{22}(u_3, u_4), u_3-f_{11}(u_1, u_2), u_4-f_{12}(u_1, u_2)),$$
has a non-singular differential at the points with $h$-image $(0,0,0,0)$. We shall fit into such a requirement by making $(0,0,0,0)$ to be a regular value.

By Sard theorem, there are real number $c_{11}, c_{12}, c_{21}, c_{22}$ with arbitrary small $c_{11}^2+c_{12}^2+c_{21}^2+c_{22}^2$ such that $(c_{11}, c_{12}, c_{21}, c_{22})$ is a regular value of $h$.  We replace $f_{pq}$ with $f_{pq}+c_{pq}\lambda_{pq}$  for $p,q=1,2$, where $\lambda_{1q}: R^2\to [0,1]$ is smooth maps with  $\lambda_{1q}|_{\psi_i(P_1)}=1$ and the support $Supp(\lambda_{1q})\subset \psi_i(U_i)$, and where $\lambda_{2q}: R^2\to [0,1]$ is smooth maps with  $\lambda_{2q}|_{\psi_j(Q_1)}=1$ and $Supp(\lambda_{2q})\subset \psi_j(U_j)$. We then obtain new homeomorphisms $f_1$ and $f_2$ such that the graph of new alternating homeomorphism is transversal  to diagonal  at $\bar P_1\times \bar Q_1$. Still $\fix f\subset \cup_{k=1}^m P_k\times Q_k$ because our changing is small.

Assume inductively that the graph of new alternating homeomorphism is transversal to diagonal at $\cup_{k=1}^s \bar P_k\times \bar Q_k$, and that $\fix f\subset \cup_{k=1}^m P_k\times Q_k$.  We can change $f_1$ at $P_{s+1}$ and $f_2$ at $Q_{s+1}$ such that   the graph of new alternating homeomorphism is transversal  to diagonal  at $\bar P_{s+1}\times \bar Q_{s+1}$.
Since our changing is arbitrary small and since transversality is stable, the graph of the result map is transversal to diagonal at $\cup_{k=1}^s \bar P_k\times \bar Q_k$.
\end{proof}

\begin{lem}\label{alternating}
If $f: S_1\times S_2\to S_1\times S_2$ is an alternating homeomorphism, then the nature map
$$\rho: S_1\to S_1\times S_2, \quad a\mapsto (a, f_1(a))$$
induces an index-preserving one-to-one corresponding between the set $\fpc(f_2\comp f_1)$ of fixed point classes of $f_2\comp f_1$ and the set $\fpc(f)$ of fixed point classes of $f$.
\end{lem}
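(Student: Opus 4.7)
The plan is to carry out three successive matches, corresponding to the three levels at which ``one-to-one correspondence'' and ``index-preserving'' can be checked: first a bijection of fixed point sets, then a bijection of Nielsen classes, and finally an equality of class indices. The first two steps are essentially formal, resting on the observation that at a fixed point $(a,b)$ of $f$ one is forced to have $b=f_1(a)$ and $a=f_2(b)=f_2\circ f_1(a)$. The third step is where the content lies, and it is where I intend to invoke Lemma \ref{transversal}.

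For the set-level bijection, writing $b = f_1(a)$, a direct calculation gives $f(\rho(a))=(f_2(b),f_1(a))=\rho(a)$ whenever $f_2\circ f_1(a)=a$, and conversely any $(a,b)\in\fix(f)$ has the form $\rho(a)$ for some $a\in\fix(f_2\circ f_1)$. For the class-level bijection I will use the path characterization: a path $\alpha$ in $S_1$ from $a$ to $a'$ satisfying $\alpha \simeq (f_2\circ f_1)\alpha$ rel endpoints lifts to the path $(\alpha, f_1\circ\alpha)$ from $\rho(a)$ to $\rho(a')$, whose $f$-image $(f_2\circ f_1\circ\alpha, f_1\circ\alpha)$ is homotopic rel endpoints to it; conversely, any path $(\gamma_1,\gamma_2)$ witnessing that $\rho(a)\sim\rho(a')$ in $\fpc(f)$ satisfies $(\gamma_1,\gamma_2)\simeq(f_2\gamma_2,f_1\gamma_1)$, hence $\gamma_1\simeq f_2\gamma_2\simeq f_2\circ f_1\gamma_1$ rel endpoints, placing $a,a'$ in a single class of $f_2\circ f_1$. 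Surjectivity on classes then follows at once from surjectivity on fixed points.

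For the index equality I plan to use Lemma \ref{transversal} to isotope $(f_1,f_2)$ through arbitrarily small perturbations to a smooth pair for which the graph of $f$ is transverse to the diagonal; by homotopy invariance this preserves the class indices on both sides as well as the class correspondence established above. In this transverse situation, at $(a,b)=\rho(a)\in\fix(f)$ the differential has the anti-diagonal block form with blocks $Df_2(b)$ and $Df_1(a)$, and a Schur-complement evaluation of $\det(I_4-Df_{(a,b)})$ together with the identity $\det(I-XY)=\det(I-YX)$ will produce $\det(I_2-D(f_2\circ f_1)_a)$. Hence the local indices agree as signs of these determinants, and summing over each pair of corresponding classes yields the claimed equality. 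The main technical hurdle is the transversality reduction itself, which is already supplied by Lemma \ref{transversal}; after that, only the matrix identity remains, and it is routine.
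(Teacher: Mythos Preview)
Your proposal is correct and follows essentially the same approach as the paper: establish the bijection on fixed points and on Nielsen classes, then invoke Lemma~\ref{transversal} to reduce the index comparison to a block-determinant computation $\det\begin{pmatrix} I & -N \\ -M & I \end{pmatrix}=\det(I-NM)$. The only cosmetic difference is that you use the path characterization of fixed point classes where the paper uses the lifting characterization, but these are equivalent and the arguments translate directly.
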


\begin{proof}
Note that
$$\fix f=\{(a,b)|f_1(a)=b, f_2(b)=a\}=\{(a, f_1(a))|a\in \fix (f_2\comp f_1)\}.$$
Suppose that $a$ and $a'$ are in the same fixed point class of $f_2\comp f_1$, and $p_i: \widetilde S_i\to S_i$ is the universal cover. Then there is a lifting $\tilde f_1$ of $f_1$ and a lifting $\tilde f_2$ of $f_2$ such that $a,a'\in p_1(\fix (\tilde f_2\comp \tilde f_1))$, there is a point $\tilde a\in p_1^{-1}(a)$ and a point $\tilde a'\in p_1^{-1}(a')$ with $(\tilde f_2\comp \tilde f_1)(\tilde a) = \tilde a$ and $(\tilde f_2\comp \tilde f_1)(\tilde a') = \tilde a'$.
Hence, $(\tau\comp(\tilde f_1\times \tilde f_2)) (\tilde a, \tilde f_1(a)) = ((\tilde f_2\comp\tilde f_1)(\tilde a), \tilde f_1(a))=(\tilde a, \tilde f_1(a))$.
It follows that $(a, f_1(a))\in (p_1\times p_2)(\fix (\tau\comp(\tilde f_1\times \tilde f_2)))$. Similarly, we also have that $(a', f_1(a'))\in (p_1\times p_2)(\fix (\tau\comp(\tilde f_1\times \tilde f_2)))$. Since $\tau\comp(\tilde f_1\times \tilde f_2)$ is a lifting of $f$, we obtain that $(a, f_1(a))$ and $(a', f_1(a'))$ are in the same fixed point class of $f$.
Conversely, suppose that $(a, f_1(a))$ and $(a', f_1(a'))$ are in the same fixed point class of $f$. Then there is a lifting $\tilde f_1$ of $f_1$ and a lifting $\tilde f_2$ of $f_2$ such that both $(a, f_1(a))$ and $(a', f_1(a'))$ lie in $(p_1\times p_2)(\fix (\tau\comp(\tilde f_1\times \tilde f_2)))$. Hence, $a,a'\in p_1(\fix (\tilde f_2\comp \tilde f_1))$, we obtain that $a$ and $a'$ are in the same fixed point class of $f_2\comp f_1$.

Now we shall prove that as a one-to-one correspondence between the sets of fixed point classes, $\rho$ is index-preserving. Since the indices of fixed point classes are invariant under homotopies, by above Lemma \ref{transversal}, we may homotope $f_1$ and $f_2$ such that the graph of $f$ is transversal with the diagonal.

Suppose that the differential $Df_1$ of $f_1$ at $a$ is $M$, and the differential $Df_2$ of $f_2$ at $b=f_1(a)$ is $N$. Then the differential $Df$ of $f$ at $(a,b)$ is $\left(\begin{array}{cc}
0 & N\\
M & 0
\end{array}
\right)$.
Hence the index of $f_2\comp f_1$ at $a$ is
$$\ind(f_2\comp f_1, a) = sgn\det(I_2-NM),$$
and the index of $f$ at $(a,b)$ is
$$\ind(f, (a,b))=sgn \det(I_4-\left(\begin{array}{cc}
0 & N\\
M & 0
\end{array}
\right))=sgn \det\left(\begin{array}{cc}
I_2 & -N\\
-M & I_2
\end{array}
\right)=sgn\det(I_2-NM),$$
where $I_k$ is the identity matrix of order $k$. Therefore,
$$\ind(f_2\comp f_1, a)=\ind(f, (a,f_1(a))).$$
\end{proof}

\begin{cor}\label{alter for N&L}
$$N(f)=N(f_2\comp f_1), \quad L(f)=L(f_2\comp f_1).$$
\end{cor}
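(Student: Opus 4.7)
The plan is to obtain both equalities as immediate consequences of Lemma~\ref{alternating}, which already does all the hard work by establishing an index-preserving bijection $\rho_*:\fpc(f_2\comp f_1)\to \fpc(f)$.

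First, I would handle the Nielsen-number equality. Recall that $N(g)$ is by definition the number of fixed point classes of $g$ with nonzero index. Because $\rho_*$ is a bijection and $\ind(f_2\comp f_1,\F_0)=\ind(f,\rho_*(\F_0))$ for every class $\F_0\in\fpc(f_2\comp f_1)$, a class on one side is essential if and only if its image (or preimage) on the other side is essential. Restricting the bijection to essential classes therefore yields $N(f)=N(f_2\comp f_1)$.

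Next, for the Lefschetz-number equality, I would invoke the Lefschetz--Hopf formula, which expresses $L(g)$ as the sum of the indices of all fixed point classes of $g$:
\[
L(f)=\sum_{\F\in\fpc(f)}\ind(f,\F),\qquad L(f_2\comp f_1)=\sum_{\F_0\in\fpc(f_2\comp f_1)}\ind(f_2\comp f_1,\F_0).
\]
Reindexing the first sum via $\F=\rho_*(\F_0)$ and using that $\rho_*$ preserves indices term by term identifies the two sums. Hence $L(f)=L(f_2\comp f_1)$.

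There is essentially no obstacle here; the only point to be careful about is that the bijection of Lemma~\ref{alternating} is global (covers \emph{all} fixed point classes, including potentially inessential ones), so both sides of the Lefschetz--Hopf identity are genuinely reindexed, not just the essential part. With that observation, both statements follow in one line each.
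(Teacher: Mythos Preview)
Your argument is correct and is exactly the reasoning the paper intends: the corollary is stated without proof, as an immediate consequence of Lemma~\ref{alternating}, and your two one-line deductions (counting essential classes for $N$, summing indices via Lefschetz--Hopf for $L$) are precisely how one reads it off from the index-preserving bijection.
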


\begin{rem}
By the commutativity of the index, we also have $N(f)=N(f_1\comp f_2)$ and $L(f)=L(f_1\comp f_2).$
\end{rem}

Directly following from Lemma \ref{alternating}, Corollary \ref{alter for N&L} and Theorem \ref{fpc for surf.}, we have

\begin{prop}\label{alter case}
If $f: S_1\times S_2\to S_1\times S_2$ is an alternating homeomorphism, then

$\mathrm{(A)}$ For every fixed point class $\F$ of $f$, we have
$$2\chi_1-1\leq \ind(f,\F)\leq 1.$$
Moreover, almost every fixed point class $\F$ of $f$ has index $\geq -1$, in the sense
that
$$\sum_{\ind(f,\F)<-1}\{\ind(f,\F)+1\}\geq 2\chi_1,$$
where the sum is taken over all fixed point classes $\F$ with
$\ind(f,\F)<-1$;

$\mathrm{(B)}$ Let $L(f)$ and $N(f)$ be the Lefschetz number
and the Nielsen number of $f$ respectively. Then
$$|L(f)-\chi_1|\leq N(f)-\chi_1.$$
\end{prop}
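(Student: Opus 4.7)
The plan is to derive Proposition \ref{alter case} as a direct transfer of Theorem \ref{fpc for surf.} from the hyperbolic surface $S$ to the product $S_1 \times S_2 = S \times S$ via the results just established for alternating homeomorphisms. Since Lemma \ref{alternating} furnishes an index-preserving bijection $\rho: \fpc(f_2 \comp f_1) \to \fpc(f)$, and Corollary \ref{alter for N&L} identifies both $L(f)$ and $N(f)$ with the corresponding invariants of the single selfmap $g := f_2 \comp f_1: S \to S$, all the assertions about $f$ reduce to the same assertions about the selfmap $g$ of the hyperbolic surface $S$, to which Jiang's theorem applies with $\chi(S) = \chi_1$.

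For part (A), I would first apply Theorem \ref{fpc for surf.}(A) to $g$ to obtain $\ind(g, \F') \leq 1$ for every $\F' \in \fpc(g)$, and then transport this along $\rho$ to get $\ind(f, \F) \leq 1$ for every $\F \in \fpc(f)$. The lower bound $\ind(f,\F) \geq 2\chi_1 - 1$ is not stated directly in Theorem \ref{fpc for surf.} but is an immediate consequence of part (B) of that theorem: if some class $\F'$ of $g$ had $\ind(g,\F') < 2\chi_1 - 1$, then already the single summand $\ind(g,\F') + 1 < 2\chi_1$ would violate the inequality $\sum_{\ind(\F')<-1}\{\ind(\F')+1\} \geq 2\chi_1$ (all other terms in that sum are $\leq 0$). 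Transferring the full inequality via the index-preserving bijection $\rho$ yields $\sum_{\ind(f,\F)<-1}\{\ind(f,\F)+1\} \geq 2\chi_1$, which is exactly the second assertion of (A).

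For part (B), I would simply invoke Theorem \ref{fpc for surf.}(C) for $g$ to get $|L(g) - \chi_1| \leq N(g) - \chi_1$, and then substitute $L(g) = L(f)$ and $N(g) = N(f)$ by Corollary \ref{alter for N&L}. There is essentially no obstacle here, since the hard analytic work has already been done in the preceding lemmas; the present proposition is a corollary whose proof consists of assembling the pieces. The only point requiring a sentence of care is extracting the per-class lower bound $2\chi_1 - 1$ from the summed inequality (B) of Theorem \ref{fpc for surf.}, as noted above.
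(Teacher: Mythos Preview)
Your proposal is correct and follows exactly the same approach as the paper: the paper states this proposition as a direct consequence of Lemma~\ref{alternating}, Corollary~\ref{alter for N&L}, and Theorem~\ref{fpc for surf.}, with no further argument given. Your write-up simply makes explicit the one small step the paper leaves implicit, namely extracting the per-class lower bound $2\chi_1-1$ from the summed inequality (B) of Theorem~\ref{fpc for surf.} by observing that all summands are nonpositive.
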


\section{Proof of Theorem \ref{main thm}}

In this section, we will complete the proof of Theorem \ref{main thm}.

Firstly, we have the following key lemma by \cite[Proposition 4.4]{ZVW} directly.

\begin{lem}[Zhang-Ventura-Wu, \cite{ZVW}]\label{rectangle hom}
Let $S_1$ and $S_2$ be two connected compact hyperbolic surfaces, and
$$\phi: \pi_1(S_1,a_1)\times \pi_1(S_2, a_2)\to \pi_1(S_1,b_1)\times \pi_1(S_2,b_2)$$
an isomorphism. Then $\phi$ must have one of the following forms:

$(i)$ if $S_1$ and $S_2$ are non-homeomorphic, then $\phi=\phi_1\times\phi_2$, i.e.,
$$\phi(\a,\b)=(\phi_1(\a),\phi_2(\b))$$
where $\phi_i: \pi_1(S_i, a_i)\to \pi_1(S_i, b_i)$ is an isomorphism for $i=1,2$.

$(ii)$ if $S_1$ and $S_2$ are homeomorphic, and hence $\pi_1(S_1,b_1)=\pi_1(S_2, b_2)=\pi_1(S_1)$ by identity, then $\phi=\sigma\comp (\phi_1\times \phi_2)$ where $\phi_1$, $\phi_2$ are two automorphisms of $\pi_1(S_1)=\pi_1(S_2)$, and $\sigma$ is the identity or a transposition. Namely, $\phi$ must have one of the following forms:
$$\quad \phi(\a,\b)=(\phi_1(\a),\phi_2(\b))\quad or \quad \phi(\a,\b)=(\phi_2(\b),\phi_1(\a)).$$
\end{lem}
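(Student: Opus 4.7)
The plan is to pin down $\phi$ by tracking how the canonical factor subgroups $A \times 1$ and $1 \times B$ land in $A' \times B'$, where $A = \pi_1(S_1, a_1)$, $B = \pi_1(S_2, a_2)$, $A' = \pi_1(S_1, b_1)$, $B' = \pi_1(S_2, b_2)$. Let $p_1, p_2$ be the projections of $A' \times B'$ onto its factors, and set $H := p_1(\phi(A \times 1))$, $H' := p_1(\phi(1 \times B))$, $K := p_2(\phi(A \times 1))$, $K' := p_2(\phi(1 \times B))$. Since $A \times 1$ and $1 \times B$ commute element-wise and $\phi$ is surjective, I get $[H, H'] = 1$ with $HH' = A'$, and $[K, K'] = 1$ with $KK' = B'$.

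The heart of the argument is a structural fact about hyperbolic surface groups: if $G = \pi_1(S)$ for a connected compact hyperbolic surface $S$, and $P, Q \leq G$ satisfy $[P, Q] = 1$ and $PQ = G$, then $P$ or $Q$ is trivial. To see this, recall that $G$ is torsion-free with cyclic centralizers of non-trivial elements. If both $P, Q$ were non-trivial, choosing $1 \neq p \in P$ and $1 \neq q \in Q$ gives $P \leq C_G(q)$ and $Q \leq C_G(p)$, both of which are cyclic. Hence $P$ and $Q$ are themselves cyclic and generated by commuting non-trivial elements, so they sit inside a single maximal cyclic subgroup of $G$; thus $G = PQ$ is cyclic, contradicting $\chi(S) < 0$.

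Applied to $A'$ and to $B'$, this lemma forces at least one of each pair $\{H, H'\}$ and $\{K, K'\}$ to be trivial; injectivity of $\phi$ then rules out the cases $H = H' = 1$ and $K = K' = 1$. Two cases survive. In case (a), $H' = K = 1$, so $\phi(A \times 1) \leq A' \times 1$ and $\phi(1 \times B) \leq 1 \times B'$; then $\phi$ restricts to isomorphisms $\phi_1 \colon A \to A'$ and $\phi_2 \colon B \to B'$ giving $\phi = \phi_1 \times \phi_2$. In case (b), $H = K' = 1$, yielding the swap form $\phi(\alpha, \beta) = (\phi_2(\beta), \phi_1(\alpha))$ with $\phi_1 \colon A \to B'$ and $\phi_2 \colon B \to A'$.

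To finish, note that case (b) supplies an isomorphism $\pi_1(S_1) \cong \pi_1(S_2)$; by the classification of compact hyperbolic surfaces this forces $S_1$ and $S_2$ to be homeomorphic. Therefore, if $S_1$ and $S_2$ are non-homeomorphic only case (a) occurs, giving conclusion (i); and if $S_1 \cong S_2$ we identify $\pi_1(S_1)$ with $\pi_1(S_2)$ and both forms appear exactly as in (ii). I expect the main obstacle to be the structural lemma on commuting generating subgroups; the remaining surface-classification step and the case analysis are routine.
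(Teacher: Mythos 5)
The paper itself gives no proof of this lemma (it is quoted from \cite{ZVW}), so I can only judge your argument on its own terms. The core of it --- projecting $\phi(A\times 1)$ and $\phi(1\times B)$ to the two factors, observing that the projections commute and generate, and invoking the fact that a torsion-free Fuchsian group has cyclic centralizers of non-trivial elements (hence cannot be generated by two commuting non-trivial subgroups) --- is correct and is essentially the standard route to such factor-preservation results. One bookkeeping slip: the combinations $H=H'=1$ and $K=K'=1$ are excluded by \emph{surjectivity} of $\phi$, while \emph{injectivity} is what kills the combinations $H=K=1$ (which would make $\phi(A\times 1)$ trivial) and $H'=K'=1$. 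With all four excluded, exactly the two cases (a) and (b) survive, so the case analysis does close.

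The genuine gap is your last step: ``case (b) supplies an isomorphism $\pi_1(S_1)\cong\pi_1(S_2)$; by the classification of compact hyperbolic surfaces this forces $S_1$ and $S_2$ to be homeomorphic.'' This implication fails for compact surfaces with boundary, which the paper explicitly allows (a hyperbolic surface here is any connected compact surface with $\chi<0$): the one-holed torus and the pair of pants both have fundamental group $F_2$ but are not homeomorphic. Consequently your argument only yields the dichotomy ``product form, or swap form together with $\pi_1(S_1)\cong\pi_1(S_2)$,'' and indeed the lemma as literally stated in the paper suffers from the same defect --- for $S_1$ a one-holed torus and $S_2$ a pair of pants the swap $(\a,\b)\mapsto(\b,\a)$ is an isomorphism of $F_2\times F_2$ not of product form, even though the surfaces are non-homeomorphic. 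The correct hypothesis for case $(i)$ is that the fundamental groups are non-isomorphic (equivalently, for closed surfaces, that the surfaces are non-homeomorphic). In the paper's application this causes no harm, because there the isomorphism is induced by a homeomorphism of $S_1\times S_2$ and hence preserves peripheral structure, so a swap forces $S_1\cong S_2$ via Dehn--Nielsen--Baer; but you should either restrict to closed surfaces, restate the dichotomy in terms of the groups, or add the peripheral-structure argument to justify the final sentence.
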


\begin{prop}\label{standard form} Let $f: S_1\times S_2\to S_1\times S_2$ be a homeomorphism, where $S_i (i=1,2)$ are two connected compact hyperbolic surfaces. Then

$\mathrm{(1)}$ if $S_1$ and $S_2$ are non-homeomorphic, then $f$ can be homotoped to a fiber-preserving homeomorphism $f_1\times f_2$;

$\mathrm{(2)}$ if $S_1$ and $S_2$ are homeomorphic (then we can view $S_1$, $S_2$  as two copies of a compact hyperbolic surface $S$), then $f$ can be homotoped to either a fiber-preserving homeomorphism $f_1\times f_2$ or an alternating homeomorphism $\tau\comp(f_1\times f_2)$, where $\tau$ is a transposition.
\end{prop}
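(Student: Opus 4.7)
The plan is to pull everything back to the level of $\pi_1$: apply Lemma \ref{rectangle hom} to the induced automorphism $f_*$ to obtain a rigid algebraic model, realize that model by an actual homeomorphism on each factor via Dehn--Nielsen--Baer, and then transport the homotopy equivalence from $\pi_1$ back to $S_1\times S_2$ using asphericity.

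First I would choose basepoints $a_i\in S_i$ and identify
$$\pi_1(S_1\times S_2,(a_1,a_2))=\pi_1(S_1,a_1)\times \pi_1(S_2,a_2).$$
After composing with a path from $f(a_1,a_2)$ back to $(a_1,a_2)$, the induced map
$$f_*:\pi_1(S_1,a_1)\times \pi_1(S_2,a_2)\longrightarrow \pi_1(S_1,a_1)\times \pi_1(S_2,a_2)$$
is an isomorphism to which Lemma \ref{rectangle hom} applies. In case (1), $f_*=\phi_1\times \phi_2$; in case (2), $f_*$ is either $\phi_1\times \phi_2$ or $\sigma\circ(\phi_1\times \phi_2)$ with $\sigma$ the coordinate swap. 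By the Dehn--Nielsen--Baer theorem for compact hyperbolic surfaces, each $\phi_i\in \aut(\pi_1(S_i))$ is induced, up to an inner automorphism, by a self-homeomorphism $f_i:S_i\to S_i$. Setting $g:=f_1\times f_2$ in case (1), or $g:=f_1\times f_2$ respectively $g:=\tau\circ(f_1\times f_2)$ in case (2), one obtains a model (fiber-preserving or alternating) homeomorphism whose induced map on $\pi_1$ agrees with $f_*$ up to conjugation.

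The final step is to upgrade this $\pi_1$-level agreement to a homotopy $f\simeq g$. Since each $S_i$ is a $K(\pi_1(S_i),1)$, the product $S_1\times S_2$ is aspherical, so the standard obstruction-theoretic result (Whitehead/Eilenberg--MacLane) identifies $[S_1\times S_2,\,S_1\times S_2]$ with the set of conjugacy classes of homomorphisms $\pi_1\to \pi_1$. Since $f_*$ and $g_*$ agree up to inner automorphism, $f$ and $g$ are freely homotopic, which is precisely the conclusion sought.

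The main obstacle I anticipate is the careful bookkeeping of basepoints and inner automorphisms. Lemma \ref{rectangle hom} is stated with a specified source and target basepoint, but $f_*$ really lives naturally as an outer automorphism; I will need to absorb the ambiguous inner automorphisms either into a preliminary isotopy of $f$ (translating the basepoint along a path) or into the freedom of choosing the $f_i$ within their mapping-class. A related subtlety is that in case (2), one must verify that whether $f_*$ takes the straight or the transposed form is genuinely determined by $f$ and not an artifact of the basepoint identification $\pi_1(S_1)=\pi_1(S_2)$; this is automatic because swapping coordinates is not inner for a nonabelian direct product factor. Once these basepoint issues are handled, the argument is essentially formal from Lemma \ref{rectangle hom} together with asphericity of $S_1\times S_2$.
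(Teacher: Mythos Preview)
Your approach is essentially identical to the paper's: apply Lemma~\ref{rectangle hom} to the induced isomorphism $f_\pi$, realize each $\phi_i$ by a surface homeomorphism via Dehn--Nielsen--Baer, and then invoke asphericity of $S_1\times S_2$ to conclude $f\simeq g$. The one substantive point you gloss over, and which the paper addresses explicitly, is the case where $S_i$ has nonempty boundary: then $\pi_1(S_i)$ is free and \emph{not} every automorphism is realized by a self-homeomorphism. The Dehn--Nielsen--Baer theorem in that setting requires $\phi_i$ to preserve the peripheral structure (the conjugacy classes of the boundary subgroups). The paper notes that this holds because $f$ is a homeomorphism of $S_1\times S_2$, hence permutes the boundary components of the product, and this forces each $\phi_i$ to respect the peripheral data of $S_i$; you should include this verification. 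Your basepoint/inner-automorphism concern, by contrast, is harmless and is absorbed exactly as you indicate by the free-homotopy classification of maps into a $K(\pi,1)$.
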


\begin{proof}
Pick a point $(a_1, a_2)\in S_1\times S_2$, and suppose $(b_1,b_2)=f(a_1, a_2)$. Now consider the isomorphism $f_{\pi}$ induced by $f$:
$$f_{\pi}: \pi_1(S_1,a_1)\times \pi_1(S_2, a_2)\to \pi_1(S_1,b_1)\times \pi_1(S_2,b_2), \quad [\gamma]\mapsto [f\comp \gamma]$$
where $[\gamma]$ denotes the loop class of the loop $\gamma: (I, \{0, 1\})\mapsto (S_1\times S_2, (a_1,a_2))$.
Recall that $S_1, S_2$ are both hyperbolic surfaces, then by  Lemma \ref{rectangle hom}, $f_\pi$ must have one of the following forms:

$(i)$ $\quad f_\pi=\phi_1\times\phi_2$, if $S_1$ and $S_2$ are non-homeomorphic;

$(ii)$ $f_\pi=\sigma\comp (\phi_1\times\phi_2)$, if $S_1$ and $S_2$ are homeomorphic.

Note that the isomorphism $f_\pi$ is induced by the homeomorphism $f$, then the isomorphism $\phi_i$ preserves the boundary subgroup structure, that is, for each boundary component $F\subset \partial S_i$, there is a boundary component $F'\subset \partial S_i$ such that $\phi_i(\im (\pi_1(F)\hookrightarrow \pi_1(S_i))$ conjugates to the boundary subgroup $\im(\pi_1(F')\hookrightarrow \pi_1(S_i))$. Hence, by the famous Dehn-Nielsen-Bar theorem for hyperbolic surfaces, the isomorphism $\phi_i$ can be induced by a self-homeomorphism $f_i$ of $S_i$. Therefore, in case (i), $f_\pi$ can be induced by the homeomorphism $f_1\times f_2$, which is a fiber-preserving homeomorphism; and in case (ii), $f_\pi$ can be induced by the homeomorphism $\sigma\comp (f_1\times f_2)$, which is a fiber-preserving homeomorphism (resp. an alternating homeomorphism) when $\sigma$ is the identity (resp. $\sigma$ is a transposition).

Since $S_i (i=1,2)$ is a compact hyperbolic surface, it is a Eilenberg-MacLane space $K(\pi_1(S_i), 1)$, and hence $S_1\times S_2$ is also a $K(\pi_1(S_1\times S_2), 1)$ space. Therefore,  the homeomorphism $f$ is homotopic to $f_1\times f_2$ in case (i), or homotopic to $\sigma \comp (f_1\times f_2)$ in case (ii), which implies the conclusions in Proposition \ref{standard form}.
\end{proof}

\begin{proof}[\textbf{Proof of Theorem \ref{main thm}}]
Since the index of fixed point class, the Lefschetz number and the Nielsen number are all homotopy invariants, we can assume that $f$ is a fiber-preserving homeomorphism or an alternating homeomorphism by Proposition \ref{standard form}. Then Theorem \ref{main thm} holds immediately according to Proposition \ref{fiber case} and Proposition \ref{alter case}.
\end{proof}

For any self-homeomorphism and some special cases of selfmaps of the product of two connected compact hyperbolic surfaces $S_1\times S_2$, we have shown some bounds for fixed points in Theorem \ref{main thm}, Proposition \ref{fiber case}, respectively. Now, there is a nature question:

\begin{question}
Does an analogous bound as in Theorem \ref{main thm} hold for any selfmap of $S_1\times S_2$?
\end{question}


\end{document}